\title{A Simple Direct Proof of Billingsley's Theorem}
\author{Richard Arratia  \\ Fred Kochman}
\date{July 2012}
\def\e{\mathbb{E} \,}
\def\p{\mathbb{P}}
\def\eps{\varepsilon}
\def\diam{{\rm diam}\,}
\def\vol{{\rm vol}\,}
\newcommand{\ignore}[1]{}
\newtheorem{proposition}{Proposition}
\begin{document}

\maketitle

\begin{abstract} 
Billingsley's theorem (1972) asserts that the
Poisson--Dirichlet process is the limit, as $n \to \infty$, of the
process giving the relative log sizes of the largest prime factor, the
second largest, and so on, of a random integer chosen uniformly from 1
to $n$.  In this paper we give a new proof that directly exploits
Dickman's asymptotic formula for the number of such integers with no
prime factor larger than $n^{1/u}$, namely $\Psi(n,n^{1/u}) \sim n \rho(u)$, to
derive the limiting joint density functions of the 
finite-dimensional projections of the log prime factor processes.  
Our main technical tool is a new criterion for the convergence in distribution of 
non-lattice discrete random variables to continuous random variables.
\end{abstract}

\section{Introduction}\label{intro1}

\subsection{Outline of This Paper}

In this paper, we provide a new proof of Billingsley's theorem
\cite{billingsley72} on the asymptotic joint distribution, as $n \to
\infty$, of the log prime factors of a random integer drawn uniformly
from $1$ to $n$.  Our goal was to stay as close as possible to
straightforward intuition, given Dickman's prior result on the
asymptotic distribution of the largest log prime factor.

Following the description of both the limiting distribution and
Dickman's result, immediately below, we present the heuristic argument
which motivated the present work. The proof itself, which appears in
Section~\ref{simdir}, closely follows the plan of the heuristic and,
in fact, is scarcely longer than that discussion.  This is made
possible by the purely probabilistic technical proposition of
Section~\ref{soft}, which provides a new characterization of
convergence in distribution, applying especially to certain
non-lattice cases.

We conclude with a brief survey of four other published proofs,
including Billingsley's.

\subsection{Review of Billingsley's Theorem}

Billingsley's theorem \cite{billingsley72} describes the joint
distribution of the log sizes of the largest, second largest, and so
on, prime factors of a random integer, by saying that after suitable
normalization, it has a Poisson--Dirichlet limit.  Here are the details.
%First, we will describe the limit distribution.

First, the Poisson--Dirichlet distribution\footnote{with parameter $\theta=1$ } for a random point
$(L_1,L_2,\dots)$ in the infinite-dimensional simplex\footnote{$ \Delta := \left\{ x  \in  \mathbb{R}^\infty: \ x_1,x_2,\dots  
\ge 0, x_1+x_2+\dots=1 \right\}$.\label{footnote Delta}} $\Delta \subset \mathbb{R}^\infty$
can be characterized by specifying the density functions on $\mathbb{R}^k$ induced by  
projecting onto the first $k$ coordinates, for each $k$. These densities
in turn involve the Dickman function\footnote{This is the unique continuous function
on $[0,\infty)$ satisfying the recursion  $\rho(u) = \rho(v) -\int_v^u{\rho(t-1)\frac{dt}{t}}$
for $0\le u-1 \le v \le u$, with initial condition $\rho(u) =1$ on $[0,1]$. See, e.g.  
\cite{Tenenbaum} for more information.}$\rho(u)$.
Specifically, for $k=1,2,\dots$, let
$X = (L_1,L_2,\dots,L_k)$
be the vector giving the first $k$ coordinates of our random
point. 
The distribution of $X$ 
has a density on $\mathbb{R}^k$ given by the formula
\begin{equation}\label{PD fdds}
  f(t_1,\dots,t_k) = \frac{1}{t_1 t_2 \dots t_k} 
\ \rho \left( \frac{1-(t_1+\dots+t_k)}{t_k} \right) 
\end{equation}
on the open set $U$ defined by
$$
U = \{ (t_1,\dots,t_k): \ 
t_1 > \dots >  t_k > 0    \mbox{ and }
t_1 + \dots + t_k < 1 \},
$$ with $f = 0$ outside of $U$.\footnote{There are other useful
  characterizations of the Poisson--Dirichlet distribution that
  de-emphasize the explicit formula~\eqref{PD fdds}: i) The
  Poisson--Dirichlet is \cite{cpc2} the scale invariant Poisson
  process (with intensity $dx/x$ on $(0,1)$), conditional on the sum
  of the arrivals being 1); ii) Ignatov's construction of the
  Poisson--Dirichlet as the ranked list of \emph{spacings} of the
  scale invariant Poisson process.\label{markov footnote}}

\begin{comment}
An informal way to understand~\eqref{PD fdds} is to introduce positive
infinitesimal increments $dt_1,dt_2,\dots,dt_k$, and read~\eqref{PD
fdds} as asserting that for $(t_1,\dots,t_k) \in U$,
\begin{equation}\label{PD fdds dt}
  \p( t_i < L_i < t_i + dt_i, i=1,\dots,k)
 = \frac{dt_1}{t_1} \  
 \dots \frac{dt_k}{t_k}  
\ \rho \left( \frac{1-(t_1+\dots+t_k)}{t_k} \right). 
\end{equation}
\end{comment}

Next, 
%we give a precise description of ``the joint distribution of the
%log sizes of the largest, second largest, and so on, prime factors of
%a random integer,'' as follows.  
given $n \ge 1$, pick a random
integer $N$ uniformly from 1 to $n$.  Let $P_i(N)$ be the $i^{\rm th}$
largest prime factor of $N$, with the convention that $P_i(N)=1$ for
$i > \Omega(N)$; here $\Omega(N)$ denotes the
number of prime factors of $N$, including multiplicity.  Let
\begin{equation}\label{def L_i}
  L_i(n) := \log_n P_i(N) = \frac{\log P_i(N)}{\log n},
\end{equation}
where
the random variable $L_i(n)$ is indexed by $n$, the parameter of the 
distribution.  [The random integer $N$, uniformly distributed from 1
  to $n$,
can be recovered via $N = n^{L_1(n) + L_2(n) + \dots}$.]

Billingsley's theorem  then asserts that, as $n \to \infty$,
\begin{equation}\label{billingsley}
(L_1(n),L_2(n),\dots) \Rightarrow (L_1,L_2,\dots),
\end{equation}
where the symbol $\Rightarrow$ denotes convergence in distribution.
%characterized by the ``Portmanteau theorem,'' quoted just before we
%state Proposition~\ref{proposition 1}.  

The random elements
$(L_1(n),L_2(n),\dots)$ and $(L_1,L_2,\dots)$ lie in the
infinite-\break dimensional space $\mathbb{R}^\infty$.  Since the topology on
this infinite-dimensional product space is characterized by the
continuity of projections onto finitely many factors, standard soft
arguments transform~\eqref{billingsley} into the equivalent statement
that for each fixed $k=1,2,\dots$, random elements of $\mathbb{R}^k$
converge in distribution, with
\begin{equation}\label{billingsley k}
(L_1(n),L_2(n),\dots,L_k(n)) \Rightarrow (L_1,L_2,\dots,L_k).
\end{equation}
%No uniformity in $k$ is involved in this equivalence.

\subsection{Heuristic Derivation}

We now give a straightforward heuristic derivation of
\eqref{billingsley k} in which we essentially reduce it to %an application of 
the much older result, due to Dickman~\cite{Dickman},
asserting that as $n \to \infty$, for each $t \in (0,\infty)$,
\begin{equation}\label{dickrho}
\frac{1}{n} \Psi\left(n, n^{1/t}\right) = \p(L_1(n) \le 1/t)
\rightarrow \rho(t).
\end{equation}
In \eqref{dickrho}, $\Psi(x,y)$ is, as usual, the number of positive
integers less than or equal to $x$, all of whose prime factors are
less than or equal to $y$.  Using only the monotonicity and continuity
of $\rho(\cdot)$, having \eqref{dickrho} hold for each $t>0$ is
equivalent to having~\eqref{dickrho} \emph{uniformly} over $t$ in
\emph{compact} subsets of $(0,\infty)$.  For a simple derivation
of~\eqref{dickrho}, see \cite[page~365]{Tenenbaum}, or
\cite[p.~492]{T}; this also gives a sharper error term and a broader
region of uniformity, although we do not use these.  The sole analytic
number theory input needed to derive~\eqref{dickrho} is Mertens'
theorem, 1874, which asserts that there exists a constant $c_0$ such
that, as $x \to \infty$,
\begin{equation}
\sum_{p \le x} 1/p \ = c_0 + \log \log x + o(1). \label{mertens}
\end{equation}
%Note that Mertens' theorem~\eqref{mertens}, and its consequences, Dickman's
%estimate~\eqref{dickrho}, and Billingsley's theorem
%\eqref{billingsley}, are more elementary than the prime number theorem.

\begin{comment}
For our heuristic derivation, 
we will construe~\eqref{billingsley k} as confirmed if we show that~\eqref{PD fdds dt} 
becomes a very close 
approximation when we replace  each continuum-valued $L_i$ with
the integer-and-prime expression $L_i(n)$, for large but finite $n$, and 
each infinitesimal $dt_i$ with a small but positive,
non-infinitesimal, $\Delta t_i$. 
\end{comment}
 
Fix a value of $k$.
For $n\ge p_1 \ge p_2 \ge \dots \ge p_k$ let $D(p_1,\dots,p_k)$
be the joint event that $(p_1 \dots p_k) |N$ and that also $N/(p_1 \dots p_k) $ is 
$p_k$-smooth, \emph{i.e.}, has no prime factor larger than $p_k$. For distinct $k$-tuples of primes
the events $D(p_1,\dots,p_k)$ are disjoint.  

The probability $P((p_1 \dots p_k) |N)$ is approximately $1/(p_1 \dots
p_k) $.  Conditional on $(p_1 \dots p_k) |N$, the quotient $N/(p_1
\dots p_k) $ is uniformly distributed over the interval $[1, \lfloor
n/(p_1p_2\dots p_k) \rfloor]$. Therefore by Dickman's
Theorem~\eqref{dickrho}, the conditional probability that $N/(p_1
\dots p_k) $ is $p_k$-smooth becomes approximately
$$
\rho \left( \frac{1-(\tau_1+\dots+\tau_k)}{\tau_k} \right)
$$
where $\tau_i = \log p_i / \log n$, $ i = 1,\dots, k$. Further, by continuity of $\rho$ and the
requirement that $\tau_i \in [t_i, t_i +\Delta t_i]$   we can safely replace each $\tau_i$ 
in the above expression with $t_i$.

For $t_1 > \dots >  t_k > 0    \mbox{ and } t_1 + \dots + t_k < 1$, 
the event $E(t_1, \dots,t_k)$ that
$$
t_i < L_i(n) < t_i + \Delta t_i, i=1,\dots,k
$$
is the union, over all $k$-tuples of primes  $p_1,\dots,p_k$, 
each $p_i \in (n^t_i,n^{t_i+\Delta t_i})$, of the disjoint events $D(p_1,\dots,p_k)$.
Therefore 
$$
P\left(E(t_1, \dots,t_k)\right) \doteq \left(\sum_{p_1}\frac{1}{p_1}\right)\dots \left(\sum_{p_k}\frac{1}{p_k}\right) \rho \left( \frac{1-(t_1+\dots+t_k)}{t_k} \right).
$$
Since $$\sum_{p \in (n^t,n^{t+\Delta t})} 1/p \doteq \log \log n^{t+\Delta t} -
 \log \log n^t =  \mbox{$\log((t+\Delta t)/t)$}  \doteq \Delta t /t$$ for small
$\Delta t,$ we conclude that 
$$
P\left(E(t_1, \dots,t_k)\right) \doteq  \frac{\Delta t_1}{t_1}\dots\frac{\Delta t_k}{t_k}\rho \left( \frac{1-(t_1+\dots+t_k)}{t_k} \right) 
$$
for large $n$ and small $\Delta t_i$. We interpret this as confirming~\eqref{billingsley k}.

While the above argument is only heuristic,
we would be gratified if the reader finds it 
%the line of argument 
simple, direct, and compelling.  In this paper, our main goal is to supply
a rigorous proof, in the spirit of the above reasoning.  This is facilitated
%work is encapsulated
by our purely probabilistic Proposition \ref{proposition 1},
%whose proof is, under a natural accounting of the errors, a six
%epsilon argument. 
giving a new criterion for convergence in distribution. 
It is \emph{soft} in the sense that it
gives no handle on the actual magnitude of the error terms. But once
it is in hand, the remaining argument to
prove Billingsley's theorem is two pages long, following the above reasoning
closely, and is given in
Section~\ref{simdir}.

\section{A Soft Result on Weak Convergence}\label{soft}

Our goal in this section is to prove convergence in distribution, of certain kinds of
discrete random variables, to variables possessing density functions.
In typical contexts involving the convergence of a sequence $\{X_n\}$
of discrete variables to a continuous variable $X$, the discrete
elements are supported on lattices of successively finer mesh, and
conditions on the point probabilities are available that ensure such
convergence. But since logs of primes do not live in any kind of
lattice, the standard tools do not apply, and so a new one, such as
Proposition~\ref{proposition 1}, seems necessary.

Our proposition presupposes a limiting density $f$ with certain
continuity properties, but makes provision for discontinuities at the
boundary of its support, such as exhibited by the function $f$ in
\eqref{PD fdds}.  This accounts for the finicky phrasing of the
continuity hypothesis in the lemma.  It may be surprising that issues
such as ``regularity of the boundary'' play no role in our
proposition.

Our proof relies instead on a ``continuity'' property  of probability
measures:  
if events $E_1 \subset E_2
\subset
\dots$ have countable union $E = \cup_{m \ge 1} E_m$, then $\p(E_m)
\to \p(E)$ as $m \to \infty$.

We will use standard notation: 
\begin{itemize}
\item $ || x-y||$ is the Euclidean distance between
two points $x,y \in   \mathbb{R}^k$;  
\item $\diam(B)$ is the diameter of the
set $B$,  \emph{i.e.}, $\diam(B):=\sup_{x,y \in B} || x-y||$; 
\item $d(x,S) := \inf_{y \in S}  || x-y||$ is the distance from a point $x$ to a set
$S$, and 
\item $d(A,B) := \inf_{x \in A} d(x,B)$ is the distance between
sets $A$ and $B$.
\end{itemize}

Our {\it a~priori} characterization of weak convergence will be 
the collection of equivalent conditions in the 
\emph{Portmanteau Theorem}, as presented in 
\cite[p.~16~and~26]{BillingsleyWeak2}. Kallenberg \cite{kallenberg}
attributes this result to A.~D.~Alexandrov, 
\cite{alexandrov}.

Here we quote only the parts that we actually invoke:
\begin{quote}
{\bf Portmanteau Theorem:}
For random elements $X,X_1,X_2,\dots$ of a metric space, the following are equivalent:
\begin{itemize}
\item [\it i)] $X_n \Rightarrow X$, \emph{i.e.}, $X_n$ converges in distribution to
  $X$.

\item [\it ii)] $\e g(X_n) \to \e g(X)$ for all bounded uniformly continuous
  functions $g$.

\item [\it iii)] $\liminf_n \p(X_n \in G) \ge \p(X \in G)$ for all open $G$.
 
\end{itemize}
\end{quote}
Our proposition provides a new necessary and sufficient criterion that refines and
{\em appears} to weaken the requirements of Item {\it iii)}, above, in
certain cases.  
\begin{comment}
Despite its specialized appearance, this proposition
will also play a role in a forthcoming paper in which we reexamine the
Donnelly--Grimmett proof of Billingsley's theorem.
\end{comment}

\begin{proposition}\label{proposition 1}
Suppose $X$ is a random element of $\mathbb{R}^k$ with density $f$
of the form  $f = f_U 1_U$, where $U \subset \mathbb{R}^k$ is an open
set, the function $f_U : U \to (0,\infty)$ is continuous, and 
$1_U: \mathbb{R}^k \to \{0,1\}$
denotes
the indicator function of $U$.
Let $X_n$, $n=1,2,\dots$, be arbitrary random elements of $\mathbb{R}^k$.

A necessary and sufficient condition for %In order to have 
$X_n \Rightarrow X$, as $n \to \infty$, is the following: 
%that is, $X_n$ converges in distribution to $X$,
%a necessary and sufficient condition is the following:

For every $\eps > 0$,  there exists $R < \infty$, such that
every  closed coordinate box $B$ satisfying $B \subset U \mbox{  and }$
\begin{equation}\label{lemma hyp 0}
R \ \diam(B) < d(B, U^c) 
\end{equation}
also satisfies
\begin{equation}\label{lemma hyp 1}
  \liminf_n \p(X_n \in B) \ge  (1-\eps)\  \vol(B)\  \inf_B f.
\end{equation}
\end{proposition}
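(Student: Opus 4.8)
The plan is to prove necessity and sufficiency separately, with necessity being the easy direction and sufficiency the substantive one. For necessity, suppose $X_n \Rightarrow X$. Given $\eps > 0$, I would exploit the uniform continuity of $f_U$ on compact sets: any closed box $B \subset U$ satisfying \eqref{lemma hyp 0} with $R$ large enough has small oscillation of $f_U$ relative to $\inf_B f$, since a box deep inside $U$ (relative to its diameter) and on which $f$ were highly variable would force $f_U$ to blow up near $\partial U$, contradicting... actually the cleaner route: fix $\eps$, and observe that by the Portmanteau theorem item iii), $\liminf_n \p(X_n \in B) \ge \p(X \in \mathrm{int}(B)) = \int_{\mathrm{int}(B)} f \ge \vol(B)\inf_B f$ for \emph{every} box $B \subset U$, with no hypothesis \eqref{lemma hyp 0} needed at all. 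So necessity holds trivially with any $R$ and even without the $(1-\eps)$ slack. (I should double-check that $\p(X \in \partial B) = 0$, which holds since $X$ has a density.)

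For sufficiency, assume the box condition and aim to verify Portmanteau item iii): $\liminf_n \p(X_n \in G) \ge \p(X \in G)$ for every open $G$. Since $\p(X \in G) = \p(X \in G \cap U) = \int_{G \cap U} f$, and $f$ is finite-valued and continuous on the open set $G \cap U$, it suffices to show that for every $\delta > 0$ there is a finite disjoint collection of closed boxes $B_1, \dots, B_m$, each contained in $G \cap U$ and each satisfying \eqref{lemma hyp 0} for the given $R = R(\eps)$, such that $\sum_j \vol(B_j) \inf_{B_j} f \ge \int_{G\cap U} f - \delta$. Granting such a collection, disjointness gives $\p(X_n \in G) \ge \sum_j \p(X_n \in B_j)$, so $\liminf_n \p(X_n \in G) \ge \sum_j (1-\eps)\vol(B_j)\inf_{B_j} f \ge (1-\eps)(\int_{G\cap U} f - \delta)$; letting $\delta \to 0$ then $\eps \to 0$ finishes it. The "continuity of measure" remark in the preamble is presumably used here: write $G \cap U$ as an increasing union of nice compact pieces so that $\int$ over the piece approaches $\int_{G\cap U} f$.

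The main obstacle — and the only real content — is the covering lemma: showing that an arbitrary open subset $W := G \cap U$ of $\R^k$ can be exhausted, up to $\int f$-measure $\delta$, by finitely many disjoint closed coordinate boxes each satisfying the "thin relative to depth" condition $R\,\diam(B) < d(B, U^c)$. The idea is a stopped dyadic (Whitney-type) decomposition: tile $\R^k$ by dyadic cubes, and select a cube $Q$ into the collection once it is small enough that $R\,\diam(Q) < d(Q, U^c)$ (equivalently $R\,\diam(Q) < d(Q, W^c)$ would be too strong; I want distance to $U^c$, which is $\ge d(Q, W^c)$, so I actually need the cube deep inside $W$ — fine, I can just use $d(Q,W^c)$ in the stopping rule and note $d(Q,U^c)\ge d(Q,W^c)$). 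Every point of $W$ lies in some selected cube since $d(x, W^c) > 0$ for $x \in W$, so the selected (disjoint, by the dyadic stopping construction) cubes cover $W$; their union has full $f$-measure in $W$, so finitely many of them capture all but $\delta$. Then replace each selected closed dyadic cube by a slightly shrunk closed box strictly inside it to restore strict inequalities and strict containment $B \subset U$ if needed. The one technical point to handle carefully is that $f = f_U$ may be unbounded near $\partial U$, but this never matters: we only ever integrate $f$ over compact subsets of the \emph{open} set $U$ (indeed of $W$), where $f$ is continuous hence locally bounded and $\int_W f \le \int f = 1 < \infty$, so dominated/monotone convergence applies without fuss.
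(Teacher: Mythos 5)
Your overall architecture matches the paper's: necessity via Portmanteau iii) applied to $B^\circ$ (correct, and indeed the paper also takes $R=0$ there), and sufficiency by reducing to open $G\subset U$, covering $G\cap U$ by finitely many disjoint closed boxes each satisfying \eqref{lemma hyp 0}, applying \eqref{lemma hyp 1} to each, and summing. The place where your plan as written does not go through is the covering lemma. A Whitney-type stopped dyadic decomposition gives you disjointness and coverage, and hence $\sum_j \int_{B_j} f \to \int_W f$; but what you actually need is
$$\sum_j \vol(B_j)\,\inf_{B_j} f \ \ge \ \int_W f - \delta,$$
and these are not the same thing. Whitney cubes deep in the interior of $W$ can be \emph{large}, and on a large cube $\vol(B)\inf_B f$ may be far below $\int_B f$ (think of $f$ with a sharp spike inside a big interior cube). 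So your stopping rule, which only forces cubes to be small relative to $d(\cdot,W^c)$, does not control the oscillation of $f$ on each selected box, and the claimed Riemann lower-sum inequality does not follow. The paper avoids this by first truncating to a fixed compact $K_0\subset U$, using uniform continuity of $\log f_U$ on $K_0$ to choose a single fine dyadic mesh $2^{-m_3}$ on which $(1-\eps)\sup_B f \le \inf_B f$, and only then selecting cubelets; that uniform mesh is what makes $\vol(B)\inf_B f$ comparable to $\int_B f$. Your approach is salvageable (further subdivide each Whitney cube until $f$'s oscillation on it is below $\eps$; a subcube has smaller diameter and no smaller $d(\cdot,U^c)$, so \eqref{lemma hyp 0} persists), but that step is missing and it is the crux.

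A secondary point: the closed dyadic cubes produced by a Whitney stopping rule are not disjoint as closed sets (adjacent cubes of different levels share faces). Since the $X_n$ are arbitrary and may have atoms on those shared faces, the step $\p(X_n\in G)\ge \sum_j \p(X_n\in B_j)$ genuinely requires shrinking each cube to a strictly interior closed box. You mention such a shrink only parenthetically and "if needed"; in fact it is mandatory, and the paper devotes a separate step (the $s_i(B)$ construction) to exactly this, taking care that the shrink preserves both \eqref{lemma hyp 0} and the oscillation bound.
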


\begin{proof}
\noindent {\bf Necessity:}  Take $R=0$, so that
  every closed box $B \subset U$ satisfies~\eqref{lemma hyp 0}.   
Assuming $X_n \Rightarrow X$, for
  the closed box $B$ we apply {\it iii)} of the Portmanteau theorem to
  the interior $B^\circ$, to get
 \mbox{$\liminf \p(X_n \in B)$} $\ge \liminf \p(X_n \in B^\circ) \ge \p(X \in
  B^\circ) = \int_B f \ge \vol(B) \ \inf_B f \ge $ \mbox{$(1-\eps)\
  \vol(B)\  \inf_B f.$}

\noindent {\bf Sufficiency:}
Assume 
\eqref{lemma hyp 0} and~\eqref{lemma hyp 1}.
By the Portmanteau theorem, Item {\it iii)}, it suffices to show that
\begin{equation}\label{iv}
  \mbox{ for all open } G \subset \mathbb{R}^k, \ \ 
\liminf_n \p(X_n \in G) \ge \p(X \in G).
\end{equation}
We claim that without loss of generality we may assume $G \subset U$ or, 
equivalently, that $G = G \cap U$.  That is, it suffices to show that
\begin{equation}\label{iv'}
  \mbox{ for all open } G \subset U, \ \ 
\liminf_n \p(X_n \in G) \ge \p(X \in G).
\end{equation}
[To see this, given $G$ open let $H = G \cap U$.  Then since $H$ is
  open 
and $H \subset U$,~\eqref{iv'} implies  
$\liminf \p(X_n \in H ) \ge \p(X \in H)$.
Now  $\p(X_n \in G ) \ge \p(X_n \in H )$,   so that  
$\liminf \p(X_n \in G ) \ge \liminf \p(X_n \in H ) 
    \ge \p(X \in H)$  
$    =  \p(X \in G)$, using $1=\p(X \in U)$.  This shows that
\eqref{iv'} implies~\eqref{iv}.]

Let $\eps >0$ be given, and fix $G$ open, with $G \subset U$.  Fix
an $R>0$
that works with $\eps$ in the condition for~\eqref{lemma hyp 1}.

Since $1 = \p( \cup_m \{ ||X|| \le m \})$, there exists $m_1$ such that 
\begin{equation}\label{1a}
\p( ||X|| > m_1 ) <\eps  .
\end{equation}
Fix such an $m_1$.

The distance $d(x,G^c)$ from $x$ to the closed set $G^c$  is a
continuous function of $x$, and is strictly
positive for $x \in G$. Hence, with  $G_j := \{x \in G: d(x,G^c) >
1/2^j \}$, we have  $G = \cup_j G_j$.  Hence $P(G \setminus G_j) \to
0$.  So there exists
$m_2$ such that for $m \ge m_2$,
\begin{equation}\label{1b}
\p\left(X \in G, \mbox{ and } d(X, G^c) \le  (1+R) \, \sqrt{k} /2^m\right) < \eps.
\end{equation}
Fix such an $m_2$.
(The factor $\sqrt{k}$ is the ratio of diameter to side length for a
cube in $k$ dimensions, and will be used below.)

We  define ``$B$ is a level-$m$ dyadic cubelet'' to mean that $B$ has the form
  $$B = \prod_{j=1}^k \left[i_j/2^m,\left(1+i_j\right)/2^m\right].$$
Since $f_U:U \to (0,\infty)$ is continuous, $\log f_U: U \to
(-\infty,\infty)$ is also continuous, and hence uniformly continuous
on compact subsets of $U$.  The compact set we have in mind is $$K_0
:= \left\{x: d(x,0) \le 1+m_1\ {\rm and\ } d(x, G^c) \ge R \, \sqrt{k}
/2^{m_2}\, \right\}.$$ Hence there exists $m_3$ with $2^{m_3} \ge
\sqrt{k}$ so that, for every level-$m$ dyadic cubelet $B \subset U$
with $m \ge m_3$, if $B \subset K_0$, then
\begin{equation}\label{1c}
     (1- \eps) \sup_B f \le \inf_B f.
\end{equation}
Fix such an $m_3$.

Note that for sets $B$ satisfying~\eqref{1c}, 
since  $\p(X \in B) = (\int_B f dx) \le \vol(B) \sup_B f$,
we have
\begin{equation}\label{1d}
    \vol(B)\inf_B f \ge  \vol(B) (1- \eps)
    \sup_B f \ge  (1- \eps)  P(X \in B)      .
\end{equation}

Now take $m = m_3$.  Let $C$ be the set of level $m$ dyadic cubelets $B$
such that $B$ has nonempty intersection with  the ball of radius
$m_1$ centered at the origin,   and  $d(B, G^c)> R \,  \sqrt{k}/2^m$.
In particular, for $B \in C$,  
\begin{equation}\label{1e0}
B \subset G \subset U,\mbox{  and } 
R \ \diam(B) < d(B,G^c) \le d(B, U^c),
\end{equation}  
\emph{i.e.}, if $B \in C$ then~\eqref{lemma hyp 0} is satisfied.
The condition on intersecting  the ball of radius $m_1$
implies that $C$ is finite.  Let  $K = \cup_C \, B$.  Any point $x \in
U$
lying outside the regions targeted by the events in
\eqref{1a} and~\eqref{1b}, that is,  
with $d(x,0) \le m_1$ and 
$d(x,G^c)>(1+R) \, \sqrt{k} /2^m$,  lies in a cubelet $B \in C$, hence
$x \in K$. Thus
$\p(X \in G \setminus K) <2 \eps$,
by the combination
of~\eqref{1a} and~\eqref{1b}. This shows that
\begin{equation}\label{1e}
\p( X \in K ) > \p( X \in G) - 2 \eps .
\end{equation}

We need a disjoint union for use later in this proof, so let $L =
\cup_C B^\circ$, the union of the interiors of the cubelets whose
union is $K$.  Since $X$ has a density $f$ with respect to Lebesgue
measure, $\p( X \in L ) = \p( X \in K ) > \p( X \in G) - 2 \eps$.  As
a finite union of open sets, $L$ is open.  Define $s_i(B)$, the
level-$i$ shrink of the cubelet $B$, to be the closed cubelet with the
same center and orientation as $B$, with side shrunk by a factor of
$(1-1/2^i)$.  By virtually the same argument as used for~\eqref{1a}
and \eqref{1b}, with $L$ in the role of $G$, there exists an $i_0$
such that for all $i \ge i_0$, for $J_i := \cup_C \, s_i(B)$, $\p( X
\in J_i) > \p( X \in L ) - \eps$.  Fix such an $i_0$ and write $s$ for
$s_{i_0}$.  We now have a finite collection of disjoint closed boxes
$s(B)$, indexed by $C$, such that
$$J := \cup_C \, s(B) \subset G$$  satisfies  
\begin{equation}\label{1f}
  \sum_C \p( X \in s(B) ) = \p( X \in J)  > P( X \in G) - 3 \eps.
\end{equation}

Comparing with~\eqref{1e0}, the shrunken boxes $s(B)$  have 
smaller diameter, and larger distance to $U^c$, so for $B \in C$, the
box $s(B)$ satisfies~\eqref{lemma hyp 0}, hence
$$
\liminf_n P(X_n \in s(B)) \ge (1-\eps)\  \vol(s(B))  \, \inf_{s(B)} f.
$$
Using the finiteness of $C$, there exists $n_1$, for all $n > n_1$, 
for all $B \in C$,
\begin{equation}\label{1g}
    P(X_n \in s(B)) \ge (1-2\eps)\  \vol(s(B))\, \inf_{s(B)} f.
\end{equation}
Fix such a choice of $n_1$.

Note that in~\eqref{1c}, replacing $B$ by $s(B)$ does not increase
the sup on the left, nor does it decrease the inf on the right,
so~\eqref{1c} and hence~\eqref{1d} hold for $s(B)$, so for $B \in C$,
\begin{equation}\label{1h}
    \vol(s(B))\inf_{s(B)} f \ \ge  \   (1- \eps)  \, \p(X \in s(B))      .
\end{equation}

Combining~\eqref{1g} with~\eqref{1h} we have, 
for all $n > n_1$, 
for all $B \in C$,
\begin{equation}\label{1i}
    \p(X_n \in s(B)) \ge (1-3\eps)\  \, \p(X \in s(B)) .
\end{equation}

Finally, we combine~\eqref{1f} and~\eqref{1i},  the finite
disjoint union of closed boxes $J = \cup_C \, s(B) \ \subset G$.  This yields,
for all $n > n_1$,
$$
  \p(X_n \in G) \ge \p(X_n \in J) = \sum_C \p(X_n \in s(B))
    \ge  \sum_C  (1-3 \eps) \p(X \in s(B))
$$
$$
 = (1-3 \eps) \p(X \in J) 
    \ge  \p(X \in G) - 6 \eps.
$$
Since $\eps$ was arbitrarily small, we have proved 
\mbox{$\liminf \p(X_n \in G) \ge \p(X \in G)$}, and so by Item {\it iii} of the Portmanteau theorem
we are done.
\end{proof}

\section{The Simple Direct Proof}\label{simdir}

In this section, we supply the promised proof of Billingsley's theorem
\eqref{billingsley}, under the original hypothesis that the random
integer is picked uniformly from $1$ to $n$.  The only inputs from
number theory are Dickman's statement~\eqref{dickrho}, and Mertens'
theorem,~\eqref{mertens}.

\begin{proof}
For each fixed $k=1,2,\dots$, we prove the weak convergence
expressed in~\eqref{billingsley k} by applying Proposition 
\ref{proposition 1}, where $X_n:= (L_1(n),\dots,L_k(n))$ from the left side of 
\eqref{billingsley k}, 
and $X$ has the density given by~\eqref{PD fdds}.

Our only task is to show that the key hypothesis \eqref{lemma hyp 1}
is satisfied.  We will see that with the choice $R = k/(2 \eps)$ the
uniformity requirement~\eqref{lemma hyp 0} is satisfied.  Fix a closed
coordinate box $B \subset U$, and use the notation
$$
    B = \prod_{i=1}^k \ [t_i, t_i + \Delta t_i].
$$
Since $B \subset U$,
$$
   0 < t_k < t_k+\Delta t_k < t_{k-1} < \dots < t_1 < t_1 + \Delta t_1 < 1.
$$
Also we have 
\begin{equation}\label{prob n}
 \p( X_n \in B ) = \frac{1}{n}  \left|\left\{ m \le n:  P_i(m) \in
   \left[n^{t_i},n^{t_i + \Delta t_i}\right],  \ i=1,\dots, k \ \right\} \right|. 
\end{equation}
Since $p_1 \ge p_2 \ge \dots \ge p_k$ are the $k$ largest prime factors of $m$ if and only if 
$m = p_1\dots p_k l$ for some $p_k$-smooth integer $l$ with $1 \le l \le n/(p_1\dots p_k)$,
collecting all possible $k$-tuples of largest prime factors yields
\begin{equation}\label{sum pi}
  \p( X_n \in B )
 =  \frac{1}{n} \sum_{p_1,\dots,p_k} \Psi( n/(p_1\dots p_k), p_k ),
\end{equation}
where independently for $i=1$ to $k$ we sum over  $p_i$ for which 
$$
t_i \le \log_n p_i \le t_i + \Delta t_i.
$$

Let
$$
\alpha := 1 - \sum_1^k (t_i + \Delta t_i), \ \mbox{ and } 
   u_0 := \frac{1-(t_1+\dots + t_k)}{t_k}.
$$
The condition $B \subset U$ implies that $\alpha > 0$ and 
$u_0 < \infty$.

Every $\Psi$ that occurs in the sum in~\eqref{sum pi} above has  
the form $\Psi(x,y)$, with $x=n/(p_1\dots p_k)$, and $y=p_k$,
so that $x \ge n^\alpha$,  and
$u := \log x / \log y \in (0,u_0]$.  Hence,
Dickman's
estimate on
$\Psi$, given by~\eqref{dickrho}, implies
$$
  \p( X_n \in B ) =  \frac{1}{n} \sum_{p_1,\dots,p_k}
  \frac{n}{p_1\dots p_k} \rho\left( \frac{\log n - \log p_1 - \dots
  - \log p_k}{\log p_k} \right) \ (1+o(1)).
$$
In the sum above, the smallest value of the function $\rho$, corresponding to the largest argument, 
occurs when the $p_i$ are as small as allowed, or closest to the left
endpoints $t_i$ of the intervals $[t_i, t_i + \Delta t_i]$.
This implies
\begin{equation}\label{sum pi 2}
  \p( X_n \in B ) \ge  \frac{1}{n} \sum_{p_1,\dots,p_k}  \frac{n}{p_1\dots p_k} 
\ \rho\left( \frac{1 -t_1 - \dots
  - t_k}{t_k} \right) \ (1+o(1)),
\end{equation}
so that
\begin{equation}\label{sum pi 3}
  \p( X_n \in B ) \ge  \sum_{p_1}\frac{1}{p_1} \dots
  \sum_{p_k}\frac{1}{p_k}
\ \rho\left( \frac{1 -t_1 - \dots
  - t_k}{t_k} \right) \ (1+o(1)).
\end{equation}
Writing $[t,t+\Delta t]$ in place of $[t_i,t_i+\Delta t_i]$,
each of the $k$ sums in~\eqref{sum pi 3} has a positive limit,
derived 
from Mertens' theorem~\eqref{mertens}.
\begin{equation}\label{log factor}
\sum_{p \in [n^t,n^{t + \Delta t}]} \frac{1}{p} \  \to \log(t+\Delta
t) - \log t
\ = \log\left( 1 + \frac{\Delta t}{t} \right).
\end{equation}

At this point in the heuristic argument of Section~\ref{intro1}, we
simply replaced $\log\left( 1 + \Delta t/t \right)$ with $\Delta t/t$,
though for input to Proposition~\ref{proposition 1} an inequality of
the form $\log\left( 1 + \Delta t/t \right) \ge \Delta t/t$ would
easily suffice, if only it were true.  Unfortunately, for $r>0$, it is
the case that $\log(1+r)<r$; but we still have $\log(1+r)/r > 1-r/2$
for $r \in (0,1)$, and from this we will succeed in manufacturing a
$(1-\eps)$ lower bound on the product of $k$ factors of the form
$\log(1+r)/r$.
 
Proposition~\ref{proposition 1} was designed precisely to work in the
face of this weaker lower bound, and the uniformity requirement
\eqref{lemma hyp 0} can be met. Namely, let $R = k/(2 \eps)$.  We have
$\Delta t_i < \diam(B)$ and $d(B,U^c) \le t_1 \le t_i$, so $R \,\Delta
t_i < R \, \diam(B) < d(B, U^c) \le t_i$.  Hence the $r$ appearing in
$\log(1+r)$ satisfies $r = \Delta t_i / t_i < 1/R = 2 \eps/k$, so $r/2
< \eps/k$, and $(1-r/2)^k > (1-\eps/k)^k > 1-\eps$.

This, together with~\eqref{sum pi 3} and~\eqref{log factor}
shows that for a closed box $B \subset U$ satisfying 
\eqref{lemma hyp 0}
with $R = k/(2 \eps)$ we have
$$ 
\liminf_n \p( X_n \in B ) \ge (1-\eps)\ \frac{\Delta t_1}{t_1} \dots 
 \frac{\Delta t_k}{t_k} \ \rho\left( \frac{1 -t_1 - \dots
  - t_k}{t_k} \right) 
$$
$$ =  (1-\eps)\   \vol(B)\  f(t_1,\dots,t_k) 
\ \ge (1-\eps)\   \vol(B)\  \inf_B f
$$
so that~\eqref{lemma hyp 1} is satisfied with the required uniformity, and then Billingsley's
Poisson--Dirichlet convergence follows, by Proposition~\ref{proposition 1}. 

This completes the simple direct proof.
\end{proof}

\section{An Historical Survey}
\label{sect history}

In this section, we discuss the previously published proofs of
\eqref{billingsley}.
There are four different complete proofs, and also some partial proofs.

\subsection{Billingsley 1972}

Billingsley's original formulation in \cite{billingsley72}  
looks very different from present day versions of his result.
The Poisson--Dirichlet distribution had not yet appeared in published 
literature as a studied object with a name. Nor, in fact, is
Dickman's function $\rho$ mentioned explicitly in this 1972
paper, although de Bruijn (1951)~\cite{debruijn} is referenced.  Instead Billingsley introduces
functions $H_0,H_1,H_2,\dots$ on $(0,\infty)$, defined by $H_0(x) =1$ and
for $i \ge 1$,
$$
   H_i(u) := \int \prod_{k=1}^i \frac{dt_k}{t_k},
$$
where the integral is taken over the region
$$
   1 < t_1 < t_2 < \dots < t_i <x, \ \sum_{k=1}^i 1/t_k \ < 1.
$$
(So $H_i(x)=0$ if $x \le i$ since in that case the region is
empty.)  His limit result is then expressed in terms of these functions.

In modern notation, it is the case that, for $u>0$
\begin{equation}\label{billHsum}
   \rho(u)  = \sum_{i=0}^\infty (-1)^i H_i(u) 
= 1 + \sum_{1 \le i < u}(-1)^i H_i(u),
\end{equation}
though Billingsley does not seem to be aware of that formula, nor does
he refer to existing estimates on $\Psi(x,y)$.  Instead he applies
inclusion-exclusion directly to all the terms in the left-hand side of
\eqref{billingsley}, and arrives at formulas involving the right-hand
side of~\eqref{billHsum}.  In particular, his inclusion-exclusion
argument, specialized to the case $k=1$ in~\eqref{billingsley k},
shows (in modern notation) that $\Psi(x,x^{1/u}) \sim x \rho(u)$ for
each fixed $u>1$; and this special case $k=1$ might be viewed as a
rigorous version of Dickman's original argument.  The ``added value,''
then, of Billingsley 1972 \cite{billingsley72}, relative to Dickman
\cite{Dickman}, 1930, is the focus on the successively smaller prime
factors, as well as the derivation of their \emph{joint} distribution.

\subsection{Donnelly and Grimmett 1993} 

The paper of Donnelly and Grimmett \cite{DonnellyGrimmett} invokes a
size-biased permutation of the prime factors of a random integer.
This device exploits a (by then) known construction of the PD 
as the ranked list of values $1-U_1,U_1-U_1U_2,U_1U_2-U_1U_2U_3,\dots$
formed by independent $U_1,U_2,\dots$, uniformly distributed in (0,1).

Both size-biased permutations and the above expressions
involving uniform (0,1) variables are implicit in Eric Bach's %famous
1984 computer science dissertation \cite{bach}, in which he devises
and analyzes an efficient algorithm for the generation of large random
integers in factored form, solving a long-standing problem.  But there is no direct appearance of
Poisson--Dirichlet process in \cite{bach} nor, for that matter, any concern with
limit theorems. %asymptotic results.  
Donnelly and Grimmett, on their part, seem not to have known of this earlier work.

\subsection{Tenenbaum 2000}

Tenenbaum \cite{Tenenbaum2000},  which concerns the rate of convergence
in Billingsley's theorem, calculates 
%uses a highly refined asymptotic series
%for the difference between $\Psi(x,y)$ and $x\rho(u)$ to yield a
%correspondingly
a highly refined asymptotic series estimate of the difference
between the cumulative joint distribution function of the first $k$
coordinates of the PD, and the corresponding exact discrete
probabilities for the left-hand side of~\eqref{billingsley}.  
Billingsley's theorem is thus a corollary of Tenenbaum's result, 
though this line of argument is (necessarily) longer and more elaborate 
than other proofs of the limit result alone.  In hindsight,
one might say that the present proof replaces Tenenbaum's detailed hard estimates 
%cumulative joint $k$-dimensional PD minus corresponding
%discrete with 
with easier estimates, plus the soft %purely probabilistic
convergence Lemma~\ref{proposition 1}.

\subsection{Arratia 2002} 

Published in \cite{budalect}, this paper shows that for
$n=1,2,\dots$, the random integers $N(n)$, as in~\eqref{def L_i}, and
one copy of the Poisson--Dirichlet distribution, as on the right of
\eqref{billingsley}, can be constructed jointly so that
\begin{equation}\label{ell 1}
  \e \sum_{i \ge 1} \left| \frac{\log P_i(N(n))}{\log n} - L_i \right|
  = O\left( \frac{\log \log n}{\log n} \right).
\end{equation}
This formula proves~\eqref{billingsley}, and gives an upper bound on
the
expected $\ell_1$ distance.  It is conjectured that the expected
$\ell_1$ distance on the left side of~\eqref{ell 1} can be made as
small as $O(1/ \log n)$.

This paper is based on a size-biased permutation of the infinite
multiset of prime factors (each $p$ occurs with independent
multiplicity $Z_p$, geometrically distributed with $P(Z_p \ge k =
1/p^k$,) which makes it possible to couple prime counts with the
Poisson process ($dx/x$ on $(e^{-\gamma},\infty)$).  The infinite
size-biased permutation may be considered an extension of the
size-biased permutation used by Donnelly--Grimmett and Bach.

\subsection{Other Arguments}

Knuth and Trabb Pardo \cite{KnuthTrabbPardo} (1978), apparently
unaware of 
Billingsley's theorem though familiar with Dickman's work, derive
the limiting \emph{marginal} distributions of the individual $L_i(n)$'s in
terms of $\rho(\cdot)$. 

Vershik \cite{Vershik} 1986, apparently also unaware of Billingsley's
result, announced the PD limit result for prime factorizations.
But this paper supplies no proof, nor indication of method. 
%One
%might guess, however, that he was thinking of a Markov chain argument,
%since that is how he and Schmidt \cite{vershikI,vershikII} proved a
%PD limit for the cycle structure of random permutations.  See
%Footnote~\ref{markov footnote}.

Kingman, \cite{kingman}, who explicitly christened the Poisson--Dirichlet
distribution in \cite{kingman} (1975), has an 11 page preprint, ``The
Poisson--Dirichlet 
distribution and the frequency of large prime divisors,'' available at
{\tt www.newton.ac.uk/preprints/}\break{\tt NI04019.pdf}.  This preprint gives
an analog of Billingsley's theorem, in which harmonic density is
substituted for natural density.  Kingman cites \cite{lloyd} for
inspiration, and \cite{levinfainleib} for providing techniques to show
the existence of a certain natural density; the latter, combined with
Kingman's result, would constitute yet another full proof of
Billingsley's theorem.

\bibliography{direct}

\begin{thebibliography}{10}

\bibitem{alexandrov}
A.D. Alexandroff.
\newblock Additive set-functions in abstract spaces.
\newblock Vol. 8, 1940, pp. 307--348, Vol. 9, 1941, pp. 563--628, Vol. 13,
  1943, pp. 169--238.

\bibitem{budalect}
Richard~A. Arratia.
\newblock On the amount of dependence in the prime factorization of a uniform
  random integer.
\newblock In {\em Contemporary Combinatorics}, volume~10 of {\em Bolyai Soc.
  Math. Stud.}, pages 29--91. J\'anos Bolyai Math. Soc., Budapest, 2002.
\newblock (Lectures from a 1998 workshop in honor of {Erd\H{o}s}).

\bibitem{cpc2}
Richard~A. Arratia, Andrew~D. Barbour, and Simon Tavar\'e.
\newblock The {P}oisson--{D}irichlet distribution and the scale-invariant
  {P}oisson process.
\newblock {\em Combin. Probab. Comput}, 8(5):407--416, 1999.

\bibitem{bach}
Eric Bach.
\newblock How to generate factored random numbers.
\newblock {\em SIAM J. Comput.}, 17(2):179--193, 1988.

\bibitem{billingsley72}
Patrick Billingsley.
\newblock On the distribution of large prime divisors.
\newblock {\em Periodica Mathematica Hungarica. Journal of the J\'anos Bolyai
  Mathematical Society}, 2:283--289, 1972.
\newblock Collection of articles dedicated to the memory of Alfr{\'e}d
  R{\'e}nyi, I.

\bibitem{BillingsleyWeak2}
Patrick Billingsley.
\newblock {\em Convergence of probability measures}.
\newblock John Wiley \& Sons Inc., second edition, 1999.

\bibitem{debruijn}
N.~G. {de Bruijn}.
\newblock The asymptotic behaviour of a function occurring in the theory of
  primes.
\newblock {\em J. Indian Math. Soc. (N.S.)}, 15:25--32, 1951.

\bibitem{Dickman}
Karl Dickman.
\newblock On the frequency of numbers containing prime factors of a certain
  relative magnitude.
\newblock {\em Arkiv f\"or Mat., Astron. och Fys.}, 22A:1--14, 1930.

\bibitem{DonnellyGrimmett}
Peter Donnelly and Geoffrey Grimmett.
\newblock On the asymptotic distribution of large prime factors.
\newblock {\em J. London Math. Soc. (2)}, 47(3):395--405, 1993.

\bibitem{kallenberg}
Olav Kallenberg.
\newblock {\em Foundations of Modern Probability}.
\newblock Springer, 1997.

\bibitem{kingman}
J.~F.~C. Kingman.
\newblock The {P}oisson--{D}irichlet distribution and the frequency of large
  prime divisors (unpublished), {\tt www.newton.ac.uk/preprints/\\ ni04019.pdf,
  approximately 2004}.

\bibitem{KnuthTrabbPardo}
Donald~E. Knuth and Luis Trabb~Pardo.
\newblock Analysis of a simple factorization algorithm.
\newblock {\em Theoretical Computer Science}, 3(3):321--348, 1976/77.

\bibitem{levinfainleib}
B.V. Levin and A.S. Fainleib.
\newblock Applications of some integral equations to problems of number theory.
\newblock {\em Usephi Mat. Nauk}, 22:119--197, 1967.

\bibitem{lloyd}
Stuart~P. Lloyd.
\newblock Ordered prime divisors of random integers.
\newblock {\em Annals of Probability}, 12:1205--1212, 1984.

\bibitem{Tenenbaum}
G{\'e}rald Tenenbaum.
\newblock {\em Introduction to analytic and probabilistic number theory}.
\newblock Cambridge University Press, 1995.
\newblock Translated from the second French edition (1995) by C. B. Thomas.

\bibitem{Tenenbaum2000}
G{\'e}rald Tenenbaum.
\newblock A rate estimate in {B}illingsley's theorem for the size distribution
  of large prime factors.
\newblock {\em The Quarterly Journal of Mathematics}, 51(3):385--403, 2000.

\bibitem{T}
G{\'e}rald Tenenbaum.
\newblock {\em Introduction {\`a} la Th{\'e}orie Analytique et Probabiliste de
  {N}ombres}.
\newblock Berin, third edition, 2008.

\bibitem{Vershik}
A.~M. Vershik.
\newblock Asymptotic distribution of decompositions of natural numbers.
\newblock {\em Doklady Akademii Nauk SSSR}, 289(2):269--272, 1986.

\end{thebibliography}
\bibliographystyle{plain}

\end{document}